\theoremstyle{plain}
\newtheorem{lem}{Lemma}
\newtheorem{cor}{Corollary}
\newtheorem{prop}{Proposition}
\newtheorem{thm}{Theorem}
\newtheorem*{thmw}{Theorem}
\DeclareMathOperator{\ad}{ad}
\DeclareMathOperator{\id}{id}
\DeclareMathOperator{\Ker}{Ker}
\DeclareMathOperator{\im}{Im}
\DeclareMathOperator{\Hom}{Hom}
\DeclareMathOperator{\soc}{Soc}
\DeclareMathOperator{\Der}{Der}
\DeclareMathOperator{\Zen}{Z}
\DeclareMathOperator{\Cent}{Cent}
\DeclareMathOperator{\T}{T}
\newcommand{\F}{\mathbb{F}}
\newcommand{\h}{\mathfrak{h}}
\newcommand{\tD}{\tilde{D}}
\newcommand{\caB}{\mathcal{B}}
\newcommand{\argu}{\hbox to 7truept{\hrulefill}}
\begin{document}


\title[Nilpotent outer restricted derivations]{Outer restricted derivations
of nilpotent restricted Lie algebras}

\author{J\"org Feldvoss}
\address{Department of Mathematics and Statistics, University of South Alabama,
Mobile, AL 36688--0002, USA}
\email{jfeldvoss@jaguar1.usouthal.edu}

\author{Salvatore Siciliano}
\address{Dipartimento di Matematica ``E.\ de Giorgi", Universit\`a del Salento,
Via Provinciale Lecce-Arnesano, I-73100 Lecce, Italy}
\email{salvatore.siciliano@unisalento.it}

\author{Thomas Weigel}
\address{Dipartimento di Matematica e Applicazioni, Universit\`a di Milano-Bicocca,
Via R.\ Cozzi 53, I-20125 Milano, Italy}
\email{thomas.weigel@unimib.it}

\subjclass[2000]{17B05, 17B30, 17B40, 17B50, 17B55, 17B56}

\keywords{Restricted Lie algebra, nilpotent Lie algebra, $p$-unipotent restricted
Lie algebra, torus, Heisenberg algebra, restricted derivation, outer restricted
derivation, nilpotent outer restricted derivation, restricted cohomology,
$p$-supplement, abelian $p$-ideal, maximal abelian $p$-ideal, maximal $p$-ideal,
free module}


\begin{abstract}
In this paper we prove that every finite-dimensional nilpotent restricted
Lie algebra over a field of prime characteristic has an outer restricted
derivation whose square is zero unless the restricted Lie algebra is a
torus or it is one-dimensional or it is isomorphic to the three-dimensional
Heisenberg algebra in characteristic two as an ordinary Lie algebra. This
result is the restricted analogue of a result of T\^og\^o on the existence
of nilpotent outer derivations of ordinary nilpotent Lie algebras in arbitrary
characteristic and the Lie-theoretic analogue of a classical group-theoretic
result of Gasch\"utz on the existence of $p$-power automorphisms of $p$-groups.
As a consequence we obtain that every finite-dimensional non-toral nilpotent
restricted Lie algebra has an outer restricted derivation.
\end{abstract}

      
\date{}
          
\maketitle


\section{Introduction} 


In 1966 W.\ Gasch\"utz proved the following celebrated result:

\begin{thmw}{\rm (W.\ Gasch\"utz \cite{G})}
Every finite $p$-group of order $>p$ has an outer automorphism of $p$-power
order.
\end{thmw}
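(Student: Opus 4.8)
My plan is to reformulate the assertion as a divisibility statement about $|\operatorname{Out}(G)|$ and then analyse $\operatorname{Aut}(G)$ through a well-chosen abelian normal subgroup. Since $\operatorname{Inn}(G)\cong G/\Zen(G)$ is a normal $p$-subgroup of $\operatorname{Aut}(G)$, it lies in every Sylow $p$-subgroup $P$ of $\operatorname{Aut}(G)$, and $[P:\operatorname{Inn}(G)]$ equals the $p$-part of $|\operatorname{Out}(G)|$; hence $G$ has a non-inner automorphism of $p$-power order if and only if $p\mid|\operatorname{Out}(G)|$, and this is the form I would prove. If $G$ is abelian with $|G|>p$, then $\operatorname{Inn}(G)=1$ and I would simply exhibit an automorphism of order $p$ from the structure of finite abelian $p$-groups (treating separately the cyclic case $\mathbb{Z}/p^{e}$ with $e\ge 2$ and the case of at least two cyclic summands). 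So from here on I assume $G$ is non-abelian.

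I would choose a maximal abelian normal subgroup $A$ of $G$. The first point is $\Cent_G(A)=A$: otherwise $\Cent_G(A)/A$ is a nontrivial normal subgroup of the $p$-group $G/A$, hence meets $\Zen(G/A)$, and pulling back an element of order $p$ yields $x\notin A$ with $\langle A,x\rangle$ abelian and normal in $G$, contradicting maximality of $A$. Put $Q:=G/A$, a nontrivial $p$-group acting faithfully on $A$. Since $A\trianglelefteq G$, the group $\operatorname{Inn}(G)$ lies in the stabiliser of $A$ in $\operatorname{Aut}(G)$, so it suffices to produce a non-inner $p$-element there. Next I would observe that the automorphisms of $G$ inducing the identity on both $A$ and $Q$ form a finite abelian $p$-group isomorphic to $Z^{1}(Q,A)$ via $\gamma\mapsto\bigl(g\mapsto g\cdot\gamma(gA)\bigr)$, and that the inner ones among these are exactly those induced by elements of $A=\Cent_G(A)$, corresponding to the coboundaries $B^{1}(Q,A)$. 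Therefore, if $H^{1}(Q,A)\neq 0$, any cocycle outside $B^{1}(Q,A)$ gives the desired non-inner automorphism of $p$-power order.

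It remains to deal with the case $H^{1}(Q,A)=0$. Here I would use that, over a $p$-group, the vanishing of a single positive cohomology group forces $A$ — after reducing to the elementary abelian layer — to be a free $\F_p[Q]$-module $\F_p[Q]^{r}$ with $r\ge 1$; in particular $H^{2}(Q,A)=0$, so the extension $1\to A\to G\to Q\to 1$ splits and $G\cong A\rtimes Q$. Then every $\F_p[Q]$-module automorphism $\tau$ of $A$ extends to $\alpha_{\tau}\in\operatorname{Aut}(G)$ fixing the complement $Q$ pointwise, and a short computation shows $\alpha_{\tau}$ is inner exactly when $\tau$ is multiplication by a central element of $Q$. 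Since $\operatorname{Aut}_{\F_p[Q]}(A)\cong\operatorname{GL}_{r}(\F_p[Q])$ has $p$-part $p^{\binom{r}{2}+r^{2}(|Q|-1)}$, which strictly exceeds $|\Zen(Q)|$ in every case except $(r,p,|Q|)=(1,2,2)$ — where $G$ is the dihedral group of order $8$ — a Sylow $p$-subgroup of $\operatorname{GL}_{r}(\F_p[Q])$ properly contains the subgroup of multiplications by central elements of $Q$, and any $\tau$ in the difference gives the required non-inner $p$-automorphism $\alpha_{\tau}$; the single exception I would dispatch by instead choosing the cyclic maximal abelian normal subgroup, for which $H^{1}\neq 0$.

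The only genuine obstacle is the case $H^{1}(Q,A)=0$: it rests on the non-trivial input that over a $p$-group the vanishing of one positive cohomology group implies cohomological triviality (hence freeness at the elementary abelian layer), on a careful reduction from a possibly non-elementary-abelian $A$ to its elementary abelian layer that preserves $\Cent_G(A)=A$, and on the check that some maximal abelian normal subgroup always escapes the degenerate configuration. The reduction to $p\mid|\operatorname{Out}(G)|$, the abelian case, and the cocycle bookkeeping yielding the case $H^{1}(Q,A)\neq 0$ are all routine.
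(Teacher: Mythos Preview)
The paper does not prove Gasch\"utz's theorem at all; it is quoted in the introduction, with a citation to \cite{G}, purely as motivation for the restricted Lie algebra analogue that the paper actually establishes. So there is no ``paper's own proof'' to compare your attempt against.

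That said, your outline is essentially the cohomological proof that goes back to Gasch\"utz, and it is worth noting that it runs in striking parallel to what the paper does for restricted Lie algebras in Theorem~\ref{nilpoutder} and Proposition~\ref{bad}: pick a maximal abelian normal object $A$, use self-centralisation $\Cent_G(A)=A$ (cf.\ Proposition~\ref{maxab}), identify the relevant outer automorphisms/derivations with $H^1(G/A,A)$, and in the vanishing case deduce freeness of $A$ over the group/enveloping algebra of $G/A$ to force a numerical contradiction with a single small exception in characteristic~$2$. Your counting in the split case and the identification of $(r,p,|Q|)=(1,2,2)$ as the lone degenerate configuration are correct.

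The one place that would require genuine care if you wrote this out in full is exactly the point you flag: passing from $H^1(Q,A)=0$ to cohomological triviality (hence $H^2(Q,A)=0$ and splitting) when $A$ is not elementary abelian. This is true for a $p$-group $Q$ acting on a finite abelian $p$-group $A$---one vanishing positive degree suffices---but it is a nontrivial input (Gasch\"utz--Uchida/Nakayama), and your subsequent $\mathrm{GL}_r(\F_p[Q])$ count literally applies only once $A$ is an $\F_p[Q]$-module, so the reduction to the elementary abelian layer while keeping $\Cent_G(A)=A$ (or an alternative device) must be made explicit. Apart from that, the argument is sound.
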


Since every finite nilpotent group is a direct product of its Sylow $p$-subgroups,
the outer automorphism group of a finite nilpotent group is a direct product
of the outer automorphism groups of its Sylow $p$-subgroups. Therefore it
is a direct consequence of Gasch\"utz' theorem that every finite nilpotent
group of order greater than $2$ has an outer automorphism. This answers a question
raised by E.\ Schenkman and F.\ Haimo in the affirmative (see \cite{S}).
  
Since groups and Lie algebras often have structural properties in common, it
seems rather natural to ask whether an analogue of Gasch\"utz' theorem holds
in the setting of ordinary or restricted Lie algebras. In the case of ordinary
Lie algebras a stronger version of such an analogue is already known and was
established by S.~T\^og\^o around the same time as Gasch\"utz proved his result.

\begin{thmw}{\rm (S.\ T\^og\^o \cite[Corollary 1]{T})}
Every nilpotent Lie algebra of finite dimension $>1$ over an arbitrary field
has an outer derivation whose square is zero.
\end{thmw}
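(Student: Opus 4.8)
The plan is to dispose of the abelian case directly and to reduce the non\nobreakdash-abelian case to a (non\nobreakdash-)vanishing statement in Lie algebra cohomology. If $L$ is abelian, then every linear endomorphism of $L$ is a derivation, $\ad(L)=0$, and since $\dim L\ge 2$ there is a nonzero nilpotent endomorphism $D$ of $L$ with $D^{2}=0$; such a $D$ is an outer derivation of square zero. So assume henceforth that $L$ is non\nobreakdash-abelian, whence automatically $\dim L>1$.

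Next I would fix a maximal abelian ideal $A$ of $L$ and use the standard fact that then $\Cent_{L}(A)=A$ (otherwise a fixed point of the nilpotent action of $L$ on $\Cent_{L}(A)/A$ would enlarge $A$). Then $A\ne 0$, $A\ne L$, $\Zen(L)\subseteq A$, and $L/A$ acts on $A$ with all composition factors trivial, since every $\ad x$ restricts to a nilpotent operator on the ideal $A$. The point is that the derivations $D$ of $L$ with $D(A)=0$ and $\im D\subseteq A$ are exactly the $1$-cocycles of $L/A$ with values in the module $A$ — the cocycle identity is precisely the Leibniz rule, once one checks that both sides depend only on classes modulo $A$ — that the inner ones among them are exactly the $\ad a$ with $a\in A$ (here $\Cent_{L}(A)=A$ is used), which correspond to the $1$-coboundaries, and that \emph{every} such $D$ satisfies $D^{2}=0$ automatically, because $\im D\subseteq A$ and $D$ vanishes on $A$. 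Hence it suffices to prove $H^{1}(L/A,A)\ne 0$: a cocycle that is not a coboundary then yields the desired outer derivation of square zero.

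For that I would prove the following lemma by induction on dimension: if $\mathfrak g$ is a nonzero nilpotent Lie algebra and $V\ne 0$ a finite-dimensional $\mathfrak g$-module all of whose composition factors are trivial, then $H^{1}(\mathfrak g,V)\ne 0$. In our situation $\mathfrak g=L/A$ is nonzero because $L$ is non\nobreakdash-abelian, and $V=A$ has trivial composition factors by Engel's theorem, so the lemma applies. If $\dim\mathfrak g=1$, then $H^{1}(\mathfrak g,V)=V/\mathfrak g V$, the cokernel of the action of a basis vector of $\mathfrak g$ on $V$, which is nonzero because that operator is nilpotent and $V\ne 0$. If $\dim\mathfrak g\ge 2$, choose a one-dimensional central ideal $\mathfrak z$ of $\mathfrak g$; then $V^{\mathfrak z}\ne 0$, it has trivial composition factors as a module over the nonzero nilpotent Lie algebra $\mathfrak g/\mathfrak z$, which has smaller dimension, so $H^{1}(\mathfrak g/\mathfrak z,V^{\mathfrak z})\ne 0$ by induction; in the Hochschild--Serre spectral sequence of $\mathfrak z\hookrightarrow\mathfrak g\twoheadrightarrow\mathfrak g/\mathfrak z$ the corner term $E_{2}^{1,0}=H^{1}(\mathfrak g/\mathfrak z,V^{\mathfrak z})$ is neither the source nor the target of any differential, hence survives to $E_{\infty}$ and injects into $H^{1}(\mathfrak g,V)$, which is therefore nonzero.

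The conceptual core is the choice of the test derivations in the second step — those killing a maximal abelian ideal $A$ and mapping into $A$, for which the square-zero condition comes for free — and I expect the only genuinely delicate points to be the bookkeeping identifying this family (and its inner members) with $Z^{1}(L/A,A)$ and $B^{1}(L/A,A)$, which hinges on $\Cent_{L}(A)=A$, and the survival of the $E_{2}^{1,0}$ corner in the Hochschild--Serre argument. The abelian case, the reduction itself, and the d\'evissage lemma are then routine.
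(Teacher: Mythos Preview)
Your proof is correct. Note, however, that the paper does not actually prove this statement: it is quoted as a known result of T\^og\^o and serves as motivation for the restricted analogue, Theorem~\ref{nilpoutder}. So there is no ``paper's own proof'' to compare against here.

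That said, your approach closely parallels the core mechanism the paper uses for the restricted case. Your reduction---take a maximal abelian ideal $A$, identify derivations $D$ with $A\subseteq\Ker(D)$ and $\im(D)\subseteq A$ with $Z^{1}(L/A,A)$, observe that such $D$ automatically square to zero, and reduce to $H^{1}(L/A,A)\ne 0$---is exactly the content of Lemma~\ref{good} and Proposition~\ref{bad} specialized to $I=J=A$, with $\Cent_{L}(A)=A$ playing the role of Proposition~\ref{maxab}. The main divergence is in how the nonvanishing of $H^{1}$ is obtained: you prove directly, by a Hochschild--Serre d\'evissage on a central one-dimensional ideal, that $H^{1}(\mathfrak g,V)\ne 0$ for any nonzero nilpotent $\mathfrak g$ acting with trivial composition factors on $V\ne 0$; in the restricted setting this fails (one can have $H^{1}_\ast(L/A,A)=0$, forcing $A$ to be $u(L/A)$-free), and the paper must then carry out a further case analysis involving maximal $p$-ideals and a $p$-supplement of $A$ to isolate the genuine exceptions. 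Your argument is thus the clean, unobstructed version of the same strategy, made possible by the fact that ordinary first cohomology of a nilpotent Lie algebra in a module with trivial composition factors never vanishes.
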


In fact, T\^og\^o's result is more general (see \cite[Theorem 1]{T}) and is
a refinement of a theorem of E.\ Schenkman that establishes the existence
of outer derivations for non-zero finite-dimensional nilpotent Lie algebras
(see \cite[Theorem 4]{J2}). Much later the first author proved a restricted
analogue of Schenkman's result for $p$-unipotent restricted Lie algebras (see
\cite[Corollary 5.2]{F}). (Here we follow \cite[Section I.4, Exercise 23,
p.\ 97]{B} by calling a restricted Lie algebra $(L,[p])$ {\em $p$-unipotent\/}
if for every $x\in L$ there exists some positive integer $n$ such that $x^{[p]^n}
=0$.) In this paper we prove that every finite-dimensional nilpotent restricted
Lie algebra $L$ over a field $\F$ of characteristic $p>0$ has an outer restricted
derivation whose square is zero unless: (1) $L$ is a torus, or (2) $\dim_\F L=1$,
or (3) $p=2$ and $L$ is isomorphic to the three-dimensional Heisenberg algebra
$\h_1(\F)$ over $\F$ as an ordinary Lie algebra (see Theorem \ref{nilpoutder}).
Indeed, in these three cases every nilpotent restricted derivation is inner. As
a consequence we also obtain a generalization of \cite[Corollary 5.2]{F} to
non-toral nilpotent restricted Lie algebras (see Theorem \ref{outder}) which
is the full analogue of Schenkman's result for nilpotent restricted Lie algebras.

In the following we briefly recall some of the notation that will be used in
this paper. A derivation $D$ of a restricted Lie algebra $(L,[p])$ is called
{\em restricted\/} if $D(x^{[p]})=(\ad_Lx)^{p-1}(D(x))$ for every $x\in L$ (see
\cite[Section 4, (15), p.\ 21]{J1}) and the set of all restricted derivations
of $L$ is denoted by $\Der_p(L)$. Observe that $\Der_p(L)$ is a restricted Lie
algebra (see \cite[Theorem 4]{J1}) and that every inner derivation of $L$ is
restricted. In this paper we will use frequently without further explanation
that the vector space of outer restricted derivations $\Der_p(L)/\ad(L)$ of
$L$ is isomorphic to the {\em first adjoint restricted cohomology space\/}
$H_*^1(L,L)$ of $L$ in the sense of Hochschild (see \cite[Theorem 2.1]{H1}).
More generally, we will need for any restricted $L$-module $M$ the vector
space $$Z_*^1(L,M):=\{\,D\in\Der(L,M)\mid\forall\,x\in L:D(x^{[p]})=(x)_M^{p-1}
(D(x))\,\}$$ of {\em restricted $1$-cocycles\/} of $L$ with values in $M$ (see
again \cite[Theorem 2.1]{H1}) and the vector space $$B_*^1(L,M):=\{\,D\in\Hom_\F
(L,M)\mid\exists\,m\in M\,\forall\,x\in L:D(x)=x\cdot m\,\}$$ of {\em restricted
$1$-coboundaries\/} of $L$ with values in $M$. Then $$H_*^1(L,M):=Z_*^1(L,M)/B_*^1
(L,M)$$ denotes the {\em first restricted cohomology space\/} of $L$ with values
in $M$. For any $L$-module $M$ we denote by $M^L:=\{\,m\in M\mid\forall\,x\in L:
x\cdot m=0\,\,\}$ the {\em space of $L$-invariants\/} of $M$ and by $\soc_L(M)$
the largest semisimple $L$-submodule of $M$. Moreover, $u(L)$ denotes the {\em
restricted universal enveloping algebra\/} of $L$ (see \cite[Section 2]{J1},
\cite[Section V.7, Theorem 12]{J3}, or \cite[Section 2.5]{SF}). For a subset $S$
of $L$ we denote by $\langle S\rangle_p$ the {\em $p$-subalgebra\/} of $L$ {\em
generated by\/} $S$ and by $\Cent_L(S)$ the {\em centralizer\/} of $S$ in $L$.
Finally, $\Zen(L):=\Cent_L(L)$ is the {\em center\/} of $L$ and $L^\prime$ is the
{\em derived subalgebra\/} of $L$. For more notation and well-known results from
the structure theory of ordinary and restricted Lie algebras we refer the reader
to the first two chapters in \cite{SF}.


\section{Preliminaries}


\subsection{Restricted derivations and cohomology}

Let $L$ be a restricted Lie algebra over a field of characteristic $p>0$ and let
$I$ be a $p$-ideal of $L$. Then the centralizer $\Cent_L(I)=L^I$ of $I$ in $L$ is
a $p$-ideal of $L$. By virtue of the five-term exact sequence associated to the
Hochschild-Serre spectral sequence the canonical mapping $$\iota:H^1_\ast(L/I,\Cent_L
(I))\longrightarrow H^1_\ast(L,L)$$ is injective. Hence one has the following sufficient
condition for the existence of outer restricted derivations:

\begin{lem}\label{out}
Let $L$ be a restricted Lie algebra over a field of characteristic $p>0$, let $I$
be a $p$-ideal of $L$, and let $D$ be a restricted derivation of $L$ satisfying
$I\subseteq\Ker(D)$ as well as $\im(D)\subseteq\Cent_L(I)$, but $\im(D)\not
\subseteq[L,\Cent_L(I)]$. Then $D$ is not inner.
\end{lem}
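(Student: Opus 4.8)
The plan is to deduce Lemma~\ref{out} from the injectivity of the canonical map
$\iota\colon H^1_\ast(L/I,\Cent_L(I))\longrightarrow H^1_\ast(L,L)$ recalled just
above its statement, together with the standard identification of outer restricted
derivations with the first adjoint restricted cohomology space. First I would observe
that the hypotheses $I\subseteq\Ker(D)$ and $\im(D)\subseteq\Cent_L(I)$ allow one to
view $D$ not merely as an element of $\Der_p(L)=Z^1_\ast(L,L)$ but as a restricted
$1$-cocycle of $L/I$ with values in the $L/I$-module $\Cent_L(I)$. Indeed, since
$I\subseteq\Ker(D)$, the map $D$ factors through the quotient $L\twoheadrightarrow L/I$,
and since $\im(D)\subseteq\Cent_L(I)=L^I$ the target lands in the $p$-ideal $\Cent_L(I)$
on which $I$ acts trivially, so $\Cent_L(I)$ is an $L/I$-module; one checks that the
cocycle and restrictedness conditions for $D$ as a derivation of $L$ with values in
$\Cent_L(I)$ descend verbatim to $L/I$. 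Thus $D$ determines a class $[D]\in
H^1_\ast(L/I,\Cent_L(I))$, and by construction $\iota([D])$ is precisely the class of
$D$ in $H^1_\ast(L,L)\cong\Der_p(L)/\ad(L)$.

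The crux is then to show that $[D]\neq 0$ in $H^1_\ast(L/I,\Cent_L(I))$; by injectivity
of $\iota$ this forces $D\notin\ad(L)$, i.e.\ $D$ is outer. Now a restricted
$1$-coboundary in $B^1_\ast(L/I,\Cent_L(I))$ is, by the definition recalled in the
introduction, a map of the form $\bar x\mapsto \bar x\cdot m$ for some fixed
$m\in\Cent_L(I)$, where the action is the (induced adjoint) action of $L/I$ on
$\Cent_L(I)$; explicitly this is $\bar x\mapsto[x,m]$ with $[x,m]\in[L,\Cent_L(I)]$.
Hence every coboundary has image contained in $[L,\Cent_L(I)]$. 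Since by hypothesis
$\im(D)\not\subseteq[L,\Cent_L(I)]$, the cocycle $D$ is not a coboundary, so
$[D]\neq 0$. Applying $\iota$ completes the argument.

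The main obstacle is purely bookkeeping: one must verify carefully that $D$, a priori
a restricted derivation of $L$ into $L$, really is a restricted $1$-cocycle for the
smaller data $(L/I,\Cent_L(I))$, and that the inflation map $\iota$ sends its class to
the class of $D$ as an outer restricted derivation of $L$. Both facts are immediate from
the definitions once one notes that $\Cent_L(I)$ is a $p$-ideal of $L$ (stated above),
hence an $L$-submodule, and that $I$ annihilates it so that it is genuinely an
$L/I$-module; the restrictedness condition $D(x^{[p]})=(\ad_L x)^{p-1}(D(x))$ only
involves the action of $x$ on $\im(D)\subseteq\Cent_L(I)$, which depends only on the
image of $x$ in $L/I$. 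No delicate spectral-sequence computation is needed beyond the
injectivity of $\iota$, which has already been established.
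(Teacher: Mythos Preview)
Your proposal is correct and follows essentially the same approach as the paper: you pass from $D$ to the induced cocycle $\tD\in Z^1_\ast(L/I,\Cent_L(I))$, observe that coboundaries have image in $[L,\Cent_L(I)]$ so that $\tD\notin B^1_\ast(L/I,\Cent_L(I))$, and then invoke the injectivity of $\iota$ to conclude that $D$ is outer. The paper's proof is more terse but identical in substance; your extra bookkeeping remarks are accurate and harmless.
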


\begin{proof}
Let $\tD\in Z^1_\ast(L/I,\Cent_L(I))$ be the linear transformation induced by $D$.
By hypothesis, $\im(\tD)\not\subseteq[L,\Cent_L(I)]$ and thus $\tD\not\in B^1_\ast
:=B^1_\ast(L/I,\Cent_L(I))$. Hence $\tD+B^1_\ast\not=0$, and therefore $D+\ad(L)=
\iota(\tD+B^1_\ast)\not=0$, so that $D$ is not inner.
\end{proof}

By using the injectivity of $\iota$ again, one deduces the following criterion for
the existence of outer restricted derivations whose square is zero.

\begin{lem}\label{good}
Let $L$ be a restricted Lie algebra over a field of characteristic $p>0$ that contains
$p$-ideals $I$ and $J$ with the following properties:
\begin{itemize}
\item[(i)]  $J\subseteq I\cap\Cent_L(I)=\Zen(I)$ and
\item[(ii)] the canonical mapping $\gamma:H^1_\ast(L/I,J)\to H^1_\ast(L/I,\Cent_L(I))$
            is non-trivial.
\end{itemize}
Then there exists an outer restricted derivation of $L$ whose square is zero.
\end{lem}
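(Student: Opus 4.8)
The plan is to produce the desired derivation by inflating a suitable restricted $1$-cocycle of $L/I$ with values in $J$ to a restricted derivation of $L$. First observe that, since $J\subseteq\Cent_L(I)$, the $p$-ideal $I$ acts trivially on $J$, so that $J$ is a restricted submodule of the restricted $L/I$-module $\Cent_L(I)=L^I$ (recall from the paragraph preceding Lemma \ref{out} that $\Cent_L(I)$ is a $p$-ideal of $L$ on which $I$ acts trivially). By hypothesis (ii) the map $\gamma$ is non-trivial, so I would choose a restricted $1$-cocycle $D\in Z^1_\ast(L/I,J)$ whose cohomology class $D+B^1_\ast(L/I,J)$ has non-zero image under $\gamma$.

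Next I would consider the composite
$$\widehat{D}\colon L\stackrel{\pi}{\longrightarrow}L/I\stackrel{D}{\longrightarrow}J\hookrightarrow L,$$
where $\pi$ denotes the canonical projection. A direct verification shows that $\widehat{D}$ lies in $Z^1_\ast(L,L)=\Der_p(L)$: this uses that the bracket and the $p$-operation on $L/I$ are induced from those on $L$, that the $L/I$-action on $J$ is the restriction of $\ad_L$, and that $J$ is invariant under $\ad_Lx$ for every $x\in L$ (so that $(\ad_Lx)^{p-1}$ restricts to the $(p-1)$-st power of the action of $x+I$ on $J$). Moreover, in cohomological terms $\widehat{D}+\ad(L)$ is exactly the image of $D+B^1_\ast(L/I,J)$ under the composite $\iota\circ\gamma\colon H^1_\ast(L/I,J)\to H^1_\ast(L/I,\Cent_L(I))\to H^1_\ast(L,L)$.

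It then remains to check two things. For outerness: the image of $D+B^1_\ast(L/I,J)$ under $\gamma$ is non-zero by the choice of $D$, and $\iota$ is injective (by the five-term exact sequence recalled before Lemma \ref{out}); hence $\widehat{D}+\ad(L)=\iota(\gamma(D+B^1_\ast(L/I,J)))\not=0$, so $\widehat{D}$ is an outer restricted derivation of $L$. For the square-zero property: $\widehat{D}$ factors through $L/I$, so $I\subseteq\Ker(\widehat{D})$, whereas $\im(\widehat{D})\subseteq J\subseteq I$ by (i); therefore $\widehat{D}(\widehat{D}(L))\subseteq\widehat{D}(I)=0$, that is, $\widehat{D}\circ\widehat{D}=0$.

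The only step requiring genuine care is the verification that $\widehat{D}$ is a restricted derivation of $L$ — equivalently, that inflation along $L\twoheadrightarrow L/I$ together with the coefficient inclusion carries $Z^1_\ast(L/I,J)$ into $Z^1_\ast(L,L)$ compatibly with the identification $\Der_p(L)/\ad(L)\cong H^1_\ast(L,L)$. This is the standard functoriality of Hochschild's restricted cohomology, already invoked in the setup before Lemma \ref{out}, and the remainder of the argument is purely formal.
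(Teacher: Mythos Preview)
Your proposal is correct and follows essentially the same route as the paper: choose a cocycle $\tD\in Z^1_\ast(L/I,J)$ with $\gamma(\tD+B^1_\ast(L/I,J))\neq 0$, inflate it to a restricted derivation $D$ of $L$, use $\im(D)\subseteq J\subseteq I\subseteq\Ker(D)$ for the square-zero property, and invoke the injectivity of $\iota$ together with $D+\ad(L)=\iota(\gamma(\tD+B^1_\ast(L/I,J)))$ for outerness. Your version simply spells out in more detail the functoriality step that the paper compresses into the phrase ``the linear transformation induced by $\tD$.''
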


\begin{proof}
Let $\tD\in Z^1_\ast(L/I,J)$ be such that $\gamma(\tD+B^1_\ast(L/I,J))\not=0$ and let
$D\in Z^1_\ast(L,L)$ be the linear transformation induced by $\tD$. Then it follows
from $$\im(D)\subseteq J\subseteq I\subseteq\Ker(D)$$ that $D^2=0$. Moreover, $$D+\ad
(L)=\iota(\gamma(\tD+B^1_\ast(L/I,J)))\not=0\,,$$ and thus $D$ is an outer restricted
derivation of $L$ with $D^2=0$.
\end{proof}

\subsection{Abelian $p$-ideals}

In the proof of our main result we will need the following criterion for the existence
of certain $p$-supplements of abelian $p$-ideals (for the first part of the proof see
also \cite[Lemma 3.1]{H1}).

\begin{prop}\label{split}
Let $L$ be a restricted Lie algebra over a field of characteristic $p>0$ and let $A$
be an abelian $p$-ideal of $L$ containing $\Zen(L)$. If $H^2_\ast(L/A,A)=0$, then there
exists a $p$-subalgebra $H$ of $L$ such that $L=A+H$ and $A\cap H=\Zen(L)$.
\end{prop}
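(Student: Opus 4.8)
The plan is to imitate the classical proof that a 2-cohomology obstruction vanishing forces an extension to split, but carried out in the restricted category. First I would form the short exact sequence of restricted Lie algebras
\begin{equation*}
0\longrightarrow A\longrightarrow L\longrightarrow L/A\longrightarrow 0,
\end{equation*}
which, since $A$ is an abelian $p$-ideal, makes $A$ into a restricted $L/A$-module. The hypothesis $H^2_\ast(L/A,A)=0$ then says that the class of this restricted extension in $H^2_\ast(L/A,A)$ is trivial, so the extension splits: there is a restricted subalgebra $\bar H$ of $L$ with $L=A+\bar H$ and $A\cap\bar H=0$. This is the content of the cited \cite[Lemma 3.1]{H1}, and it handles the existence of \emph{some} $p$-complement to $A$.

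The real work is to modify $\bar H$ so that instead of $A\cap\bar H=0$ we obtain $A\cap H=\Zen(L)$; that is, we must enlarge $\bar H$ by exactly $\Zen(L)$ while keeping it a $p$-subalgebra. The key observation is that $\Zen(L)\subseteq A$ by hypothesis and $\Zen(L)$ is a central $p$-ideal of $L$, hence in particular $[\bar H,\Zen(L)]=0$ and $\Zen(L)$ is closed under the $p$-map. I would therefore set $H:=\bar H+\Zen(L)$. Because $\Zen(L)$ is central, $H$ is closed under the bracket; because $\Zen(L)$ is a $p$-subspace and $\bar H$ is a $p$-subalgebra and the two commute, the restricted Jacobson identity $(x+z)^{[p]}=x^{[p]}+z^{[p]}$ for commuting $x\in\bar H$, $z\in\Zen(L)$ shows $H$ is closed under $[p]$, so $H$ is a $p$-subalgebra. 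Clearly $L=A+\bar H\subseteq A+H$, so $L=A+H$. Finally, for the intersection: $\Zen(L)\subseteq A$ and $\Zen(L)\subseteq H$ give $\Zen(L)\subseteq A\cap H$; conversely, if $a=h+z$ with $a\in A$, $h\in\bar H$, $z\in\Zen(L)$, then $h=a-z\in A$ (using $\Zen(L)\subseteq A$), whence $h\in A\cap\bar H=0$, so $a=z\in\Zen(L)$. Thus $A\cap H=\Zen(L)$, as required.

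The step I expect to be the main obstacle is the first one: invoking the restricted Hochschild 2-cohomology to split the extension, i.e. justifying that the vanishing of $H^2_\ast(L/A,A)$ with $A$ abelian really does produce a restricted subalgebra complement. This requires care with the restricted $p$-map on the extension and is precisely why the proposition points to \cite[Lemma 3.1]{H1}; given that reference the argument is essentially a citation, but one must check that the module structure of $A$ over $L/A$ used there agrees with the one induced by the bracket in $L$, which it does since $A$ is abelian. Everything after that — the passage from an honest complement to one meeting $A$ in the center — is the routine bookkeeping sketched above, the only subtlety being the verification that $\bar H+\Zen(L)$ is $[p]$-closed, which uses centrality of $\Zen(L)$ in an essential way.
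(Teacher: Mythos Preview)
Your first step contains a genuine gap. The restricted cohomology group $H^2_\ast(L/A,A)$ classifies restricted extensions of $L/A$ by the module $A$ in which $A$ sits as a \emph{strongly abelian} $p$-ideal, i.e., an abelian ideal on which the $p$-map vanishes (this is \cite[Theorem~3.3]{H1}). The given sequence $0\to A\to (L,[p])\to L/A\to 0$ is not of this type: the $p$-map of $L$ restricted to $A$ need not be zero, so this extension has no class in $H^2_\ast(L/A,A)$, and the vanishing of that group does not by itself produce a $p$-subalgebra $\bar H$ of $(L,[p])$ complementary to $A$. The reference to \cite[Lemma~3.1]{H1} does not cover this; in the paper that citation is attached to the \emph{first} part of the argument, which is precisely the device you are missing.

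What the paper does is replace $[p]$ by an auxiliary $p$-map $[p]'$ on $L$ (via Jacobson's theorem on extending $p$-maps from a basis) so that $A$ becomes strongly abelian in $(L,[p]')$, while $(L/A,[p])\cong(L/A,[p]')$ and $x^{[p]}-x^{[p]'}\in\Zen(L)$ for all $x\in L$. Only now does \cite[Theorem~3.3]{H1} apply, yielding a complement $C$ with $L=A\oplus C$ that is a $p$-subalgebra of $(L,[p]')$ --- not of $(L,[p])$. The passage to $H:=\Zen(L)\oplus C$ thus plays a different role than in your sketch: it is not merely cosmetic bookkeeping to move the intersection from $0$ to $\Zen(L)$, but the essential step that converts a $[p]'$-closed complement into a $[p]$-closed one, using that the two $p$-maps differ by values in $\Zen(L)$. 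Your second step is fine once a $p$-complement in $(L,[p])$ exists, but obtaining one is exactly where the modified $p$-map is needed.
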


\begin{proof} 
Let $\caB_A$ be a vector space basis of $A$ being contained in a vector space basis
$\caB$ of $L$. According to \cite[Section V.7, Theorem 11]{J3} or \cite[Theorem 2.2.3]{SF},
the function $\psi:\caB\to L$ defined by
\begin{eqnarray*}
\psi(x):=
\left\{
\begin{array}{cl}
0 & \mbox{if }x\in\caB_A\\ 
x^{[p]} & \mbox{if }x\in\caB\setminus\caB_A
\end{array}
\right.
\end{eqnarray*}
can be extended uniquely to a $p$-mapping $\argu^{[p]^\prime}:L\to L$ making $(L,[p]^\prime)$
a restricted Lie algebra. By construction,
\begin{equation}\label{pdif}
x^{[p]}-x^{[p]^\prime}\in\Zen(L)\quad\mbox{for every }x\in L\,.
\end{equation}
The ideal $A$ -- with trivial $p$-mapping -- is also a $p$-ideal of $(L,[p]^\prime)$,
and the identity mapping induces an isomorphism between $(L/A,[p])$ and $(L/A,[p]^\prime)$.
It follows from $H^2_\ast(L/A,A)=0$ that there exists a $p$-subalgebra $C$ of $(L,[p]^\prime)$
such that $L=A\oplus C$ (see \cite[Theorem 3.3]{H1}). Then by \eqref{pdif}, $H:=\Zen(L)
\oplus C$ is a $p$-subalgebra of $(L,[p])$ with the desired properties.
\end{proof}

\noindent {\bf Remark.}
A similar proof shows that Proposition \ref{split} holds more generally for abelian
$p$-ideals $A$ of $L$ that do not necessarily contain the center of $L$ if one replaces
everywhere $\Zen(L)$ by the image of $A$ under the $p$-mapping of $(L,[p])$.
\bigskip

The next result shows that maximal abelian $p$-ideals of finite-dimensional non-abelian
nilpotent restricted Lie algebras are self-centralizing and contain the center properly.

\begin{prop}\label{maxab}
Let $L$ be a finite-dimensional non-abelian nilpotent restricted Lie algebra over a
field of characteristic $p>0$ and let $A$ be a maximal abelian $p$-ideal of $L$. Then
$\Zen(L)\subsetneq A=\Cent_L(A)$.
\end{prop}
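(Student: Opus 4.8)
The plan is to prove, in order, that (a) $\Zen(L)\subseteq A$, (b) $A=\Cent_L(A)$, and (c) this containment is strict; the first two follow directly from the maximality of $A$, and (c) is a one-line consequence.

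For (a), I would first note that $\Zen(L)$ is itself a $p$-ideal of $L$: it is visibly an ideal, and $\ad_L(z^{[p]})=(\ad_Lz)^p=0$ for $z\in\Zen(L)$, so $z^{[p]}\in\Zen(L)$. Then I would check that $A+\Zen(L)$ is an abelian $p$-ideal: it is an ideal, it is abelian because $[A,\Zen(L)]=0$, and it is $p$-closed because for $a\in A$ and $z\in\Zen(L)$ all the terms involving brackets in the expansion of $(a+z)^{[p]}$ vanish (since $\ad_L(ta+z)=t\,\ad_L(a)$ annihilates $a$), so that $(a+z)^{[p]}=a^{[p]}+z^{[p]}\in A+\Zen(L)$. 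Maximality of $A$ then forces $A+\Zen(L)=A$, i.e.\ $\Zen(L)\subseteq A$.

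For (b), I would suppose towards a contradiction that $A\subsetneq C:=\Cent_L(A)$. As recalled at the beginning of Section~2, $C$ is a $p$-ideal of $L$, and $C/A$ is a nonzero ideal of the nilpotent Lie algebra $L/A$, hence has nonzero intersection with $\Zen(L/A)$; I would choose $x\in C\setminus A$ with $x+A\in\Zen(L/A)$, so that $[L,x]\subseteq A$ and, because $x\in C$, also $[x,A]=0$. The crux is to show $x^{[p]}\in\Zen(L)$: for any $y\in L$ one has $(\ad_Lx)^p(y)=(\ad_Lx)^{p-1}([x,y])$, and since $[x,y]\in[L,x]\subseteq A$ while $x$ centralizes $A$ we get $(\ad_Lx)([x,y])=[x,[x,y]]=0$; as $p\ge 2$ this yields $(\ad_Lx)^p(y)=0$, so $\ad_L(x^{[p]})=(\ad_Lx)^p=0$ and $x^{[p]}\in\Zen(L)\subseteq A$ by (a). Consequently $A+\F x$ is an ideal of $L$ (since $[L,A+\F x]\subseteq[L,A]+[L,x]\subseteq A$), it is abelian (since $[x,A]=0$), and it is $p$-closed (again the bracket terms in $(a+\lambda x)^{[p]}$ vanish because $\ad_L(a)$ and $\ad_L(x)$ both annihilate $a$, and $x^{[p]}\in A$). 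Thus $A+\F x$ is an abelian $p$-ideal strictly containing $A$, a contradiction; hence $A=\Cent_L(A)$.

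For (c), if $\Zen(L)=A$ then $\Cent_L(A)=\Cent_L(\Zen(L))=L$, whereas (b) gives $\Cent_L(A)=A=\Zen(L)$, so $L=\Zen(L)$ would be abelian, contrary to hypothesis; therefore $\Zen(L)\subsetneq A$. I expect the only genuinely delicate step to be the computation that $x^{[p]}$ is central, which rests on the restricted identity $\ad_L(x^{[p]})=(\ad_Lx)^p$ together with the vanishing of the iterated bracket $(\ad_Lx)^2(y)$; the rest is routine bookkeeping with the $p$-mapping and the standard fact that a nonzero ideal of a finite-dimensional nilpotent Lie algebra meets the center nontrivially.
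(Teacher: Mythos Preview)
Your proof is correct and follows essentially the same route as the paper's: both argue that if $\Cent_L(A)\supsetneq A$ one can pick $x\in\Cent_L(A)\setminus A$ with $x+A$ central in $L/A$ and enlarge $A$ to a strictly bigger abelian $p$-ideal, and both derive the strict inclusion $\Zen(L)\subsetneq A$ from self-centralization. The only cosmetic differences are that the paper uses $A+\langle x\rangle_p$ instead of $A+\F x$ (so it does not compute $x^{[p]}$ explicitly), and your step~(a) is actually redundant once~(b) is established, since $\Zen(L)\subseteq\Cent_L(A)=A$ automatically.
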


\begin{proof}
Suppose by contradiction that $C:=\Cent_L(A)\supsetneq A$. As $C/A$ is a non-zero ideal
of the nilpotent Lie algebra $L/A$, there is an element $x$ in $L$ that does not belong
to $A$ such that $x+A\in\Zen(L/A)\cap C/A$. Put $X:=A+\langle x\rangle_p$. Then $X$ is
an abelian $p$-ideal of $L$ properly containing $A$ which by the maximality of $A$ implies
that $L=X$ is abelian contradicting the hypothesis that $L$ is non-abelian.

It follows from $A=\Cent_L(A)$ that $A$ is a faithful $L/A$-module under the induced
adjoint action. Suppose now that $\Zen(L)=A$. Then $\Zen(L)$ is a faithful and trivial
$L/\Zen(L)$-module under the induced adjoint action which implies that $L=\Zen(L)$ is
abelian, which again is a contradiction.
\end{proof}

Let $\T_p(L)$ denote the set of all semisimple elements of a finite-dimensional nilpotent
restricted Lie algebra $L$. Since $L$ is nilpotent, we have that $\T_p(L)\subseteq\Zen(L)$
and thus $\T_p(L)$ is the unique maximal torus of $L$. Furthermore, $\T_p(L)$ is a $p$-ideal
of $L$ and it follows from \cite[Theorem 2.3.4]{SF} that $L/I$ is $p$-unipotent for every
$p$-ideal $I$ of $L$ that contains $\T_p(L)$. These well-known results will be used in the
following proofs without further explanation.

\begin{prop}\label{bad}
Let $L$ be a finite-dimensional non-abelian nilpotent restricted Lie algebra over a
field $\F$ of characteristic $p>0$ and let $A$ be a maximal abelian $p$-ideal of $L$.
If furthermore every restricted derivation $D$ of $L$ with $D^2=0$ is inner, then $A$
is a free $u(L/A)$-module of rank $r:=\dim_\F\Zen(L)$. In  particular, $\dim_\F L=d+r
\cdot p^d$, where $d:=\dim_\F L/A$.
\end{prop}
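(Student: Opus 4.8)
The plan is to derive from the hypothesis, via Lemma~\ref{good}, that $H^1_\ast(L/A,A)=0$, and then to read off the freeness of $A$ from the representation theory of $u(L/A)$. To set up, recall that $L$ is non-abelian, so Proposition~\ref{maxab} yields $\Zen(L)\subsetneq A=\Cent_L(A)$; in particular $\T_p(L)\subseteq\Zen(L)\subseteq A$, whence $L/A$ is $p$-unipotent. Consequently $u(L/A)$ is a finite-dimensional local Frobenius $\F$-algebra whose only simple module is the trivial module $\F$ (see \cite[Chapter~2]{SF}), and $H^n_\ast(L/A,-)\cong\operatorname{Ext}^n_{u(L/A)}(\F,-)$ as usual (see \cite{H1}). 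Also, $A=\Cent_L(A)$ means that $A$ is a faithful restricted $(L/A)$-module under the adjoint action.

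The key step is to apply Lemma~\ref{good} with $I=J=A$. Condition~(i) holds because $A$ is abelian: $A\cap\Cent_L(A)=A=\Zen(A)$. For condition~(ii), since $\Cent_L(A)=A$ the canonical map $\gamma\colon H^1_\ast(L/A,A)\to H^1_\ast(L/A,\Cent_L(A))$ is induced by $\id_A$ and is therefore the identity of $H^1_\ast(L/A,A)$. Hence, were $H^1_\ast(L/A,A)$ non-zero, $\gamma$ would be non-trivial and Lemma~\ref{good} would produce an outer restricted derivation of $L$ with square zero, contrary to the hypothesis. Thus $H^1_\ast(L/A,A)=0$, i.e.\ $\operatorname{Ext}^1_{u(L/A)}(\F,A)=0$.

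Next I would pass to freeness and compute the rank. Because $\F$ is the only simple $u(L/A)$-module, an induction on composition length promotes $\operatorname{Ext}^1_{u(L/A)}(\F,A)=0$ to $\operatorname{Ext}^1_{u(L/A)}(N,A)=0$ for every finite-dimensional $u(L/A)$-module $N$, so $A$ is an injective $u(L/A)$-module; as $u(L/A)$ is self-injective, $A$ is projective, and as $u(L/A)$ is local, $A$ is free, say $A\cong u(L/A)^{\oplus n}$. Finally, $\soc_{u(L/A)}(A)=A^{L/A}$ (again since $\F$ is the only simple module), and $A^{L/A}=A\cap\Zen(L)=\Zen(L)$ has dimension $r$; on the other hand the socle of a free module of rank $n$ over the local Frobenius algebra $u(L/A)$ has dimension $n$, because $u(L/A)$ has one-dimensional socle. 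Therefore $n=r$, and $\dim_\F L=\dim_\F L/A+\dim_\F A=d+r\cdot p^d$.

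The one non-formal ingredient is the homological input about $u(L/A)$: that it is a local, self-injective algebra with a single simple module and one-dimensional socle, so that the vanishing of $\operatorname{Ext}^1(\F,-)$ already forces freeness and the invariants $A^{L/A}=\Zen(L)$ pin down the rank. Everything else is bookkeeping; the observation that makes Lemma~\ref{good} bite is simply that the choice $I=J=A$ renders $\gamma$ the identity map, so that non-vanishing of $H^1_\ast(L/A,A)$ contradicts the inner-derivation hypothesis directly.
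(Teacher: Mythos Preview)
Your argument is correct and follows the paper's proof essentially verbatim: apply Lemma~\ref{good} with $I=J=A$ (using Proposition~\ref{maxab} to ensure $\Cent_L(A)=A$) to force $H^1_\ast(L/A,A)=0$, then deduce freeness and read off the rank from $\soc_{L/A}(A)=A^{L/A}=\Zen(L)$. The only difference is cosmetic: where the paper invokes \cite[Proposition~5.1]{F} to pass from $H^1_\ast(L/A,A)=0$ to freeness, you unpack that step explicitly via the local Frobenius structure of $u(L/A)$, which is exactly what that cited result amounts to.
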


\begin{proof}
Suppose that $H^1_\ast(L/A,A)\not=0$. Then by Lemma \ref{good} (applied to $I:=J:=A$)
in conjunction with Proposition \ref{maxab}, there exists an outer restricted derivation
of $L$ whose square is zero, a contradiction. Hence $H^1_\ast(L/A,A)=0$.

Another application of Proposition \ref{maxab} yields $\T_p(L)\subseteq A$. Consequently,
$L/A$ is $p$-unipotent and it follows from \cite[Proposition 5.1]{F} that $A$ is
a free $u(L/A)$-module. Moreover, by virtue of the Engel-Jacobson theorem (see \cite[
Corollary 1.3.2(1)]{SF}), the trivial $L/A$-module $\F$ is the only irreducible restricted
$u(L/A)$-module up to isomorphism. Thus $$\soc_{L/A}(A)=A^{L/A}=\Zen(L)\,,$$ and $A$ is
a free $u(L/A)$-module of rank $r:=\dim_\F\Zen(L)$. In particular, $\dim_\F A=r\cdot
p^d$ and the dimension formula for $L$ follows.
\end{proof}

Note that $\dim_\F A=\dim_F\Zen(L)\cdot p^{\dim_\F L/A}$ shows again that $A$ contains
$\Zen(L)$ properly if $L$ is not abelian.

\subsection{Maximal $p$-ideals of nilpotent restricted Lie algebras}

For the convenience of the reader we include a proof of the following result.

\begin{lem}\label{pnilmax}
Let $L$ be a finite-dimensional non-toral nilpotent restricted Lie algebra over
a field of characteristic $p>0$. Then every maximal $p$-ideal of $L$ containing
the unique maximal torus of $L$ has codimension one in $L$.
\end{lem}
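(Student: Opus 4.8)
The plan is to reduce to the $p$-unipotent case and then exploit the structure of $u(Q)$ for a $p$-unipotent restricted Lie algebra $Q$. Let $T:=\T_p(L)$ be the unique maximal torus of $L$ and let $M$ be a maximal $p$-ideal of $L$ with $T\subseteq M$. Since $L$ is not toral, we have $L\neq T$, and since $L$ is nilpotent we have $L^\prime\subsetneq L$; moreover $L/T$ is a non-zero $p$-unipotent nilpotent restricted Lie algebra. So the claim is equivalent to the statement that every maximal $p$-ideal of the non-zero $p$-unipotent nilpotent restricted Lie algebra $Q:=L/M^{\prime}\cdots$ — more precisely, passing to $L/T$ it suffices to show: a non-zero finite-dimensional $p$-unipotent nilpotent restricted Lie algebra $Q$ has a $p$-ideal of codimension one, and more generally every maximal $p$-ideal of $Q$ has codimension one. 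Since $p$-ideals of $L$ containing $T$ correspond bijectively to $p$-ideals of $L/T$, and maximality and codimension are preserved, this reduction is harmless.

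First I would treat the $p$-unipotent case directly. Let $Q$ be non-zero, finite-dimensional, $p$-unipotent and nilpotent, and let $N$ be a maximal $p$-ideal of $Q$. Then $Q/N$ is a $p$-unipotent nilpotent restricted Lie algebra with no proper non-zero $p$-ideals. I claim $\dim_\F Q/N=1$. Indeed, $Q/N$ is nilpotent, so if it is non-zero its center $\Zen(Q/N)$ is a non-zero $p$-ideal (the center of any restricted Lie algebra is a $p$-subalgebra, hence a $p$-ideal), forcing $\Zen(Q/N)=Q/N$, i.e.\ $Q/N$ is abelian. Now in an abelian $p$-unipotent restricted Lie algebra every subspace that is closed under the $p$-map is a $p$-ideal; pick any non-zero $x\in Q/N$ and consider $\langle x\rangle_p$, the $p$-subalgebra generated by $x$. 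If $\langle x\rangle_p\neq Q/N$ this is a proper non-zero $p$-ideal, a contradiction; hence $Q/N=\langle x\rangle_p$. But for a $p$-unipotent element $x$ one has $x^{[p]^n}=0$ for some $n$, so $\langle x\rangle_p$ is spanned by $x,x^{[p]},x^{[p]^2},\dots$, and in the abelian setting any proper subspace of this span that is $[p]$-stable (for instance the span of $x^{[p]},x^{[p]^2},\dots$ when $x^{[p]}\neq 0$) is again a $p$-ideal; iterating forces $x^{[p]}=0$ and $\dim_\F\langle x\rangle_p=1$, so $\dim_\F Q/N=1$ as desired. This also shows such $N$ exists: take any codimension-one subspace of $Q$ containing $Q^\prime$ and the image of the $p$-map, which is automatically a $p$-ideal.

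Finally I would combine this with the reduction. Given a maximal $p$-ideal $M$ of $L$ containing $T$, the quotient $L/M$ is a quotient of $L/T$, hence $p$-unipotent (using the stated fact that $L/I$ is $p$-unipotent for any $p$-ideal $I\supseteq T$), nilpotent, and non-zero since $M$ is a proper $p$-ideal; by maximality $L/M$ has no proper non-zero $p$-ideals, and by the previous paragraph $\dim_\F L/M=1$, i.e.\ $M$ has codimension one in $L$.

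The main obstacle is purely bookkeeping: making sure the correspondence between $p$-ideals of $L$ containing $T$ and $p$-ideals of $L/T$ is used correctly, and being careful that in the $p$-unipotent abelian quotient the $p$-map really does produce a proper non-zero $p$-ideal unless the algebra is one-dimensional. Everything else is a routine application of nilpotency (non-zero nilpotent restricted Lie algebras have non-zero center, which is a $p$-ideal) together with the defining property of $p$-unipotence.
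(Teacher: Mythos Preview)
Your argument is correct and follows essentially the same strategy as the paper: pass to the quotient by the maximal $p$-ideal, use nilpotency to get a non-zero center, and use $p$-unipotence to produce a one-dimensional $p$-ideal, which by maximality must be everything. The paper's version is somewhat more streamlined---it works directly with $L/I$ (no detour through $L/T$ is needed, since $L/I$ is already $p$-unipotent once $I\supseteq\T_p(L)$) and, rather than first proving $L/I$ is abelian, simply picks a non-zero central element $z$, replaces it by the last non-zero iterate $z^{[p]^k}$ to arrange $z^{[p]}=0$, and concludes $L/I=\F z$; but the underlying idea is the same.
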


\begin{proof}
Let $I$ be a maximal $p$-ideal of $L$ that contains the unique maximal torus
$\T_p(L)$ of $L$. Then $\Zen(L/I)\neq 0$ and because $L/I$ is $p$-unipotent,
there exists a non-zero central element $z$ of $L/I$ such that $z^{[p]}=0$. Then
$Z:=\F z$ is a one-dimensional $p$-ideal of $L/I$ and the inclusion-preserving
bijection between $p$-ideals of $L/I$ and $p$-ideals of $L$ that contain $I$ in
conjunction with the maximality of $I$ yields that $L/I=Z$ is one-dimensional.
\end{proof}


\section{Main results}


The main goal of this paper is to establish the existence of nilpotent outer
restricted derivations of finite-dimensional nilpotent restricted Lie algebras.
It is well-known that the restricted cohomology of tori vanishes (see \cite{H1}
and the main result of \cite{H2} or \cite[Corollary 3.6]{F}). Hence tori have
no outer restricted derivations. For the convenience of the reader we include
the following straightforward proof of the latter statement.

\begin{prop}\label{tori}
Every restricted derivation of a torus over any field of characteristic $p>0$
vanishes.
\end{prop}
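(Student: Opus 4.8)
The plan is to show directly that $Z^1_*(T,T)=B^1_*(T,T)$ for a torus $T$, so that $H^1_*(T,T)=0$ and hence $\Der_p(T)=\ad(T)$. Actually, since $T$ is abelian, $\ad(T)=0$, so the claim is the stronger assertion that $\Der_p(T)=0$: every restricted derivation of $T$ is the zero map. First I would reduce to the ground field being algebraically closed (or at least perfect): extending scalars commutes with forming $\Der_p$ in a suitable sense, and a nonzero restricted derivation would remain nonzero after base change, so it suffices to treat $T=\F e_1\oplus\cdots\oplus\F e_n$ with $e_i^{[p]}=e_i$ a ``standard'' torus. The key structural fact I would use is that such a torus is spanned by toral elements, i.e.\ elements $x$ with $x^{[p]}=x$.

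The heart of the argument is the following computation for a toral element $x$ (so $x^{[p]}=x$) and any restricted derivation $D$ of $T$. Since $T$ is abelian, $\ad_T x=0$, so the restrictedness condition $D(x^{[p]})=(\ad_T x)^{p-1}(D(x))$ becomes
\begin{equation*}
D(x)=D(x^{[p]})=(\ad_T x)^{p-1}(D(x))=0,
\end{equation*}
because $(\ad_T x)^{p-1}=0$ whenever $p\geq 2$ (for $p=2$ this is $(\ad_T x)^1=\ad_T x=0$; for $p>2$ it is a higher power of the zero map). Hence $D$ annihilates every toral element. Since the toral elements span $T$, linearity forces $D=0$.

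The only real obstacle is justifying that a torus is spanned by toral elements over an arbitrary field of characteristic $p$, and handling the non-perfect case cleanly. I would argue: by definition a torus has a basis $e_1,\dots,e_n$ on which the $p$-operation acts semisimply; after passing to the algebraic closure $\bar\F$, the restricted Lie algebra $T\otimes_\F\bar\F$ decomposes as a direct sum of one-dimensional $p$-subalgebras $\bar\F f_j$ with $f_j^{[p]}=\lambda_j f_j$, $\lambda_j\neq 0$, and replacing $f_j$ by $\mu_j f_j$ with $\mu_j^{p-1}=\lambda_j^{-1}$ (available in $\bar\F$) makes each $f_j$ toral. Thus $T\otimes_\F\bar\F$ is spanned by toral elements, and since any $\F$-linear restricted derivation $D$ of $T$ induces a $\bar\F$-linear restricted derivation of $T\otimes_\F\bar\F$ which vanishes by the computation above, we get $D\otimes\id_{\bar\F}=0$ and therefore $D=0$. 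Alternatively, and perhaps more in keeping with the paper's style, one can invoke that the restricted cohomology of a torus with coefficients in any restricted module vanishes (cited just above in the text) to conclude $H^1_*(T,T)=0$ directly; but the self-contained toral-element computation is shorter and I would present that.
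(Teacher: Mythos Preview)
Your argument is correct, but the paper's proof avoids the base-change detour entirely by exploiting the intrinsic definition of a semisimple element rather than the existence of a toral basis. The paper observes that for any $x$ in a torus one has $D(x^{[p]})=(\ad_Lx)^{p-1}(D(x))=0$ (exactly your computation), and then inductively $D(x^{[p]^n})=0$ for all $n\ge 1$. Now the defining property of a semisimple element is $x\in\langle x^{[p]}\rangle_p$, so $x$ is already an $\F$-linear combination of the iterated $p$-th powers $x^{[p]},x^{[p]^2},\dots$, all of which lie in $\Ker(D)$; hence $D(x)=0$ over the original field. What your approach buys is a concrete picture (a toral basis) at the cost of passing to $\bar\F$ and checking that $\Der_p$ behaves well under scalar extension; what the paper's approach buys is a two-line proof valid over any field, with no auxiliary reductions and using only the definition of ``torus''.
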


\begin{proof}
Suppose that $L$ is a torus and let $D$ be any restricted derivation of $L$.
Then we have $D(x^{[p]})=(\ad_Lx)^{p-1}(D(x))=0$ for every $x\in L$, and
inductively, $D(x^{[p]^n})=0$ for every $x\in L$ and every positive integer
$n$. Since $x\in\langle x^{[p]}\rangle_p$, we conclude that $D(x)=0$ for every
$x\in L$.
\end{proof}

If $L$ is one-dimensional, then either $L$ is a torus or $L=\F e$ with $e^{[p]}
=0$. In the second case the (outer) restricted derivations of $L$ coincide with
the vector space endomorphisms $\mathrm{End}_\F(L)=\F\cdot\id_L$ of $L$ and
therefore no non-zero (outer) restricted derivation is nilpotent.

Indeed there is only one more finite-dimensional nilpotent restricted Lie algebra
(up to isomorphism of ordinary Lie algebras) that has no nilpotent outer restricted
derivations, namely the three-dimensional restricted Heisenberg algebra over a
field of characteristic two. Since the derived subalgebra of any Heisenberg algebra
is central, it follows from \cite[Example 2, p.\ 72]{SF} that Heisenberg algebras
are restrictable. For the proof of the next result one only needs that the image of
every $p$-mapping is central so that the result does not depend on the particular
choice of the $p$-mapping.

\begin{prop}\label{3dimheisenberg}
Every nilpotent restricted derivation of a three-dimensional restricted Heisenberg
algebra over a field of characteristic two is inner. 
\end{prop}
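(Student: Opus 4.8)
The plan is to compute $\Der_p(L)$ explicitly in a convenient basis and to check that the only elements acting nilpotently on $L$ already lie in $\ad(L)$. Fix a basis $x,y,z$ of $L$ with $[x,y]=z$ and $z$ central. Since the image of any $p$-map of $L$ is contained in $L^\prime=\F z$, we may normalize it so that $x^{[2]}=y^{[2]}=z^{[2]}=0$, and then the characteristic-two addition formula for $p$-maps gives $(ax+by+cz)^{[2]}=ab\,z$ for all $a,b,c\in\F$.

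First I would determine $\Der(L)$: for a derivation $D$ one has $D(z)=[D(x),y]+[x,D(y)]$, which automatically lies in $\F z$; writing $D(x)=a_{11}x+a_{21}y+a_{31}z$ and $D(y)=a_{12}x+a_{22}y+a_{32}z$ this yields $D(z)=(a_{11}+a_{22})z$. Next I would impose that $D$ be restricted, i.e.\ $D(u^{[2]})=[u,D(u)]$ for all $u\in L$. Evaluating at $u=x$ and at $u=y$ gives $0=[x,D(x)]=a_{21}z$ and $0=[y,D(y)]=a_{12}z$ (using $-z=z$), so $a_{21}=a_{12}=0$; conversely, once $a_{21}=a_{12}=0$, a direct computation with $(ax+by+cz)^{[2]}=ab\,z$ shows $D(u^{[2]})=[u,D(u)]$ for every $u\in L$. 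Hence $\Der_p(L)$ consists exactly of the maps
\[
D(x)=a_{11}x+a_{31}z,\qquad D(y)=a_{22}y+a_{32}z,\qquad D(z)=(a_{11}+a_{22})z
\]
with $a_{11},a_{22},a_{31},a_{32}\in\F$ arbitrary.

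Relative to the ordered basis $(x,y,z)$ such a $D$ is lower triangular with diagonal entries $a_{11}$, $a_{22}$, $a_{11}+a_{22}$, so its characteristic polynomial is $(t-a_{11})(t-a_{22})(t-a_{11}-a_{22})$; thus $D$ is nilpotent as an operator on $L$ precisely when $a_{11}=a_{22}=0$. In that case $D(x)=a_{31}z$, $D(y)=a_{32}z$, $D(z)=0$, which equals $a_{31}\ad(y)+a_{32}\ad(x)$ (again using $-z=z$), so $D\in\ad(L)$, as desired. The only genuinely computational step -- and the one place where characteristic two is essential -- is the reduction of the restricted condition to $a_{21}=a_{12}=0$: in odd characteristic the $p$-map of $L$ vanishes identically and $(\ad_Lu)^{p-1}$ annihilates $L$, so the restricted condition becomes vacuous, $\Der_p(L)=\Der(L)$, and one does find outer nilpotent restricted derivations. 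It is therefore important to verify the restricted condition for all $u\in L$ rather than only on basis elements.
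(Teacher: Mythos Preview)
Your computation is clean and essentially parallels the paper's argument, but the normalization step is a genuine gap: the $p$-mapping is part of the given restricted structure and cannot be chosen at will. For instance, if $z^{[2]}=z$ (so that the centre $\F z$ is a one-dimensional torus---a case that actually arises later in the paper when Theorem~\ref{outder} is proved), then no change of basis can make $z^{[2]}=0$, since the maximal torus is an invariant of the restricted Lie algebra. Even when $z^{[2]}=0$ one cannot in general arrange $x^{[2]}=y^{[2]}=0$ over an arbitrary field of characteristic $2$. Consequently your description of $\Der_p(L)$ as precisely the maps with $a_{12}=a_{21}=0$ is valid only for the trivial $p$-map, not for every restricted structure on $\h_1(\F)$, and the proposition is a statement about all of them.

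The paper's proof sidesteps this by using only the inclusion $L^{[2]}\subseteq\Zen(L)=\F z$, which holds for every $p$-map. It first observes that $D(z)\in\F z$ (so $z$ is an eigenvector of $D$) and then uses nilpotency of $D$ to conclude $D(z)=0$; only after this does the restricted condition at $x$ read $0=D(x^{[2]})=[x,D(x)]$, yielding $D(x)\in\F x\oplus\F z$ with no assumption on the actual value of $x^{[2]}$. From there the argument is essentially the one you wrote. Your approach is easily repaired either along these lines or by carrying arbitrary values $x^{[2]},y^{[2]},z^{[2]}\in\F z$ through the computation; the lower-triangular shape and the final conclusion survive, though the intermediate description of $\Der_p(L)$ changes with the $p$-map.
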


\begin{proof}
Let $\F$ be any field of characteristic $2$ and let $L$ be the three-dimensional
Heisenberg algebra $\h_1(\F)=\F x+\F y+\F z$ defined by $[x,y]=z\in\Zen(L)$. Since
$L^\prime$ is central, we have that
\begin{equation}\label{char2heisenberg}
L^{[2]}\subseteq\Zen(L)=\F z\,.
\end{equation}
Suppose that $D$ is any nilpotent outer restricted derivation of $L$. It follows
from $$[x,D(z)]=[y,D(z)]=0$$ that $z$ is an eigenvector of $D$. This forces $D(z)
=0$, as $D$ is nilpotent. Moreover, by (\ref{char2heisenberg}) we obtain that
\begin{align}\nonumber
(\ad_Lx)(D(x))&=D(x^{[2]})=0\,,\\ \nonumber
(\ad_Ly)(D(y))&=D(y^{[2]})=0\,,
\end{align}
and then
\begin{align}\nonumber
D(x)&=\alpha x+\lambda z\,,\\ \nonumber
D(y)&=\beta y+\mu z
\end{align}
for suitable $\alpha,\beta,\lambda,\mu\in\F$. Furthermore, $D([x,y])=0$ yields $\alpha
=\beta$. 

Now, observe that $D^2=\alpha D$ and therefore $D^n=\alpha^{n-1}D$ for every positive
integer $n$. As $D$ is nilpotent, we conclude that $\alpha=0$. Thus one has $$D=\ad_L
(\lambda y+\mu x)\,,$$ so that $D$ is inner, as claimed.
\end{proof}

Now we are ready to prove our first main result:

\begin{thm}\label{nilpoutder}
Let $L$ be a nilpotent restricted Lie algebra of finite dimension $>1$ over a field
$\F$ of characteristic $p>0$. Then $L$ has an outer restricted derivation $D$ with
$D^2=0$ unless $L$ is a torus or $p=2$ and $L\cong\h_1(\F)$ as ordinary Lie algebras.
\end{thm}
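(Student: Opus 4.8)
The plan is to argue by contradiction: suppose $L$ is neither a torus, nor one-dimensional, nor isomorphic (as an ordinary Lie algebra) to $\h_1(\F)$ in characteristic two, yet every nilpotent restricted derivation of $L$ is inner. I would first dispose of the abelian case. If $L$ is abelian and not a torus, then $L$ has a non-zero $p$-unipotent part; splitting off the torus one may assume $L=\F e_1\oplus\cdots\oplus\F e_n$ with all $e_i^{[p]}=0$ (at least after reducing to the $p$-unipotent quotient, using $\T_p(L)\subseteq\Zen(L)$). Here any linear map $L\to L$ landing in a suitable subspace is a restricted derivation, and since $\dim_\F L\geq 2$ one easily writes down a nilpotent non-inner one (inner derivations of an abelian algebra are zero), a contradiction. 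So from now on $L$ is non-abelian nilpotent.

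Now I would invoke the machinery already set up. Let $A$ be a maximal abelian $p$-ideal of $L$; by Proposition \ref{maxab}, $A=\Cent_L(A)$ and $\Zen(L)\subsetneq A$. By Proposition \ref{bad}, the standing assumption forces $A$ to be a free $u(L/A)$-module of rank $r:=\dim_\F\Zen(L)\geq 1$, and $\dim_\F L=d+r\cdot p^d$ where $d:=\dim_\F L/A>0$. The strategy is to show this severely constrains $L$, and then to produce a nilpotent outer restricted derivation by hand in the remaining small cases. The key extra input I would use is Lemma \ref{good}: to get an outer restricted derivation squaring to zero it suffices to find $p$-ideals $J\subseteq I$ with $J\subseteq\Zen(I)$ and $H^1_\ast(L/I,J)\to H^1_\ast(L/I,\Cent_L(I))$ non-trivial. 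A natural choice of $I$ is a maximal $p$-ideal of $L$ containing $\T_p(L)$, which by Lemma \ref{pnilmax} has codimension one (as $L$ is non-toral), so $L/I$ is one-dimensional $p$-unipotent and its restricted cohomology with coefficients in any module is very explicitly computable. One then chooses $J$ to be an appropriate one-dimensional central $p$-ideal, and computes the relevant cohomology map directly; the obstruction to this map being trivial is a small linear-algebra condition on how $I$ and its centralizer sit inside $L$.

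The remaining case analysis is where the characteristic-two Heisenberg exception appears. After the reductions above one should be down to the situation where $L/A$ is small — I expect $d=1$ and $r=1$, forcing $\dim_\F L=1+p$. One then checks directly, using that $L/I$ is one-dimensional and $A$ is free of rank one over $u(L/A)$, that $L$ is very close to a Heisenberg-type algebra; for $p$ odd (or $p=2$ together with the requirement $\dim_\F L>3$, i.e. $1+p>3$, which fails only at $p=2$ giving $\dim_\F L=3$) the cohomological criterion of Lemma \ref{good} succeeds, contradicting the assumption, while at $p=2$ and $\dim_\F L=3$ one recovers exactly $\h_1(\F)$, the genuine exception handled in Proposition \ref{3dimheisenberg}. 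I would also need to handle the a priori possibility $d\geq 2$: there the dimension count $\dim_\F L=d+r\cdot p^d$ combined with the freeness of $A$ over $u(L/A)$ and a direct construction (again via Lemma \ref{good} with $I$ of codimension one and $J$ a line in $A^{L/A}=\Zen(L)$) should always yield a nilpotent outer restricted derivation.

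The main obstacle I anticipate is the explicit cohomology computation needed to apply Lemma \ref{good} in the tight case $\dim_\F L=1+p$: one must show that the connecting/corestriction map $H^1_\ast(L/I,J)\to H^1_\ast(L/I,\Cent_L(I))$ is non-zero, which amounts to exhibiting a restricted $1$-cocycle on the one-dimensional algebra $L/I$ valued in the line $J$ that is not a coboundary when pushed into $\Cent_L(I)$ — equivalently, verifying that $J\not\subseteq[L,\Cent_L(I)]$ after identifying cocycles with elements of these spaces. Pinning down $\Cent_L(I)$ and the bracket $[L,\Cent_L(I)]$ precisely, and seeing that the inclusion fails exactly outside the three excluded cases, is the delicate point; everything else is bookkeeping with the dimension formula and the structure of one-dimensional $p$-unipotent quotients.
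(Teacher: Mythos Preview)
Your overall architecture matches the paper's: reduce to the non-abelian case, invoke Proposition~\ref{maxab} and Proposition~\ref{bad} to make $A$ free of rank $r$ over $u(L/A)$ with $\dim_\F L=d+r\cdot p^d$, and aim to force $r=1$, $p^d=2$. Your use of Lemma~\ref{good} with $I$ a codimension-one maximal $p$-ideal and $J$ a line in $\Zen(L)$ also correctly reproduces the paper's Cases~1 and~2.1 (where $[L,\Cent_L(I)]=0$ or $\Zen(L)\not\subseteq I$).

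The genuine gap is in what the paper calls Case~2.2, namely when $\Zen(L)\subsetneq\Cent_L(I)=\Zen(I)$ for \emph{every} maximal $p$-ideal $I\supseteq\T_p(L)$. Here your criterion via Lemma~\ref{good} demands $\Zen(L)\not\subseteq[L,\Cent_L(I)]$, and you acknowledge this as ``the delicate point'' without offering a mechanism to verify it. Indeed, in $\h_1(\F)$ with $p=2$ one has $\Zen(L)=[L,\Cent_L(I)]$ for the obvious $I$, so the obstruction is real, and you need an argument that it \emph{only} occurs there. The paper does not attempt to check this inclusion directly. Instead it brings in the one ingredient you omit: Proposition~\ref{split}. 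From $H^2_\ast(L/A,A)=0$ (freeness of $A$) it produces a $p$-subalgebra $H$ with $L=A+H$ and $A\cap H=\Zen(L)$, then enlarges $H$ to a maximal $p$-ideal $J\supseteq\T_p(L)$, picks $e\in\Zen(J)\setminus\Zen(L)$, and observes that $e\notin A$ and $A\cap J$ has codimension one in $A$. Viewing $A$ as a free $u(E)$-module for the one-dimensional $p$-subalgebra $E=\langle e+A\rangle_p\subseteq L/A$ yields the decisive inequality
\[
r\cdot p^d-1=\dim_\F A\cap J\le\dim_\F A^E=r\cdot p^{d-1},
\]
which forces $r=1$ and $p^d=2$ in one stroke. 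This simultaneously disposes of the $d\ge2$ case (which you leave as ``should always yield'') and the $r\ge2$ case, without any further cohomology computation. So the missing idea is precisely the $p$-supplement from Proposition~\ref{split} and the resulting rank-versus-socle count; once you have it, no further case analysis on $d$ or $p$ is needed.
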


\begin{proof}
Assume that $L$ is neither a torus nor one-dimensional. In the following $T$ will
denote the unique maximal torus $\T_p(L)$ of $L$ and by assumption $L/T\neq 0$. We
proceed by a case-by-case analysis. 
\bigskip

{\bf Case 1:} \emph{There exists a maximal $p$-ideal $I$ of $L$ containing $T$ such
that $\Zen(L)\nsubseteq I$.}

\noindent According to Lemma \ref{pnilmax}, $I$ has codimension $1$ in $L$ and therefore
$L=\F x\oplus I$ for any $x\in\Zen(L)\backslash I$. Let $0\neq z\in\Zen(I)$ and consider
the linear transformation $D$ of $L$ defined by setting $D(x):=z$ and $D(y):=0$ for every
$y\in I$. Then $D$ is a derivation of $L$ (see \cite[Section I.4, Exercise 8(a), p.\ 92]{B})
with $D^2=0$. Moreover, the inner derivation $\ad_La$ vanishes on $x$ for every $a\in L$
and thus $D$ cannot be inner. Finally, as $L/I$ is $p$-unipotent and $I$ has codimension
$1$ in $L$, it follows that $L^{[p]}\subseteq I$. This implies that $D(l^{[p]})=0=(\ad_L
l)^{p-1}(D(l))$ for every $l\in L$, and so $D$ is restricted.
\bigskip

Note that Case 1 covers already the abelian case. So for the rest of the proof we may
assume that $L$ is not abelian.
\bigskip

{\bf Case 2:} \emph{$\Zen(L)\subseteq I$ for every maximal $p$-ideal $I$ of $L$ containing
$T$.}

\noindent Suppose that $\Cent_L(I)\not\subseteq I$ for some maximal $p$-ideal $I$ of $L$
containing $T$. Then there exists $x\in\Cent_L(I)\backslash I$ such that $L=\F x\oplus I$;
in particular, $x\in\Zen(L)\subseteq I$, a contradiction. Hence we may assume from now on
that $\Cent_L(I)=\Zen(I)$ for every maximal $p$-ideal $I$ of $L$ containing $T$.
\bigskip

{\bf Case 2.1:} \emph{There exists a maximal $p$-ideal $I$ of $L$ containing $T$ such that
$\Cent_L(I)$ $=\Zen(L)$.}

\noindent Let $x\in L\backslash I$ be such that $L=\F x\oplus I$. Let $0\neq z\in\Zen(L)$ and
consider the vector space endomorphism $D$ of $L$ given by $D(x):=z$ and $D(y):=0$ for every
$y\in I$. Then as above, $D$ is a restricted derivation of $L$ with $D^2=0$. By construction,
$\Ker(D)=I$ and $\im(D)=\F z$. Since by hypothesis $[L,\Cent_L(I)]=0$, it follows from Lemma
\ref{out} that $D$ is not inner.
\bigskip

{\bf Case 2.2:} \emph{$\Zen(L)\subsetneq\Cent_L(I)=\Zen(I)$ for every maximal $p$-ideal
$I$ of $L$ containing $T$.}

\noindent Suppose that every restricted derivation $D$ of $L$ satisfying $D^2=0$ is inner.
Let $A$ be a maximal abelian $p$-ideal of $L$. Since $L$ is not abelian, it follows from
Proposition \ref{maxab} that $A$ contains $\Zen(L)$ properly. According to Proposition
\ref{bad}, $A$ is a free $u(L/A)$-module of rank $r:=\dim_\F\Zen(L)$ and $\dim_\F L=d+r
\cdot p^d$, where $d:=\dim_\F L/A$.

Since $A$ is a free $u(L/A)$-module, \cite[Proposition 5.1]{F} implies that $H^2_\ast
(L/A,A)=0$. Hence by Proposition \ref{split}, there exists a $p$-subalgebra $H$ of $L$
such that $L=A+H$ and $A\cap H=\Zen(L)$. In particular, $H$ contains $T$. As $A$ contains
$Z(L)$ properly, one has $H\neq L$. Choose now a proper $p$-subalgebra $J$ of $L$ with
$H\subseteq J$ of maximal dimension. Then $J$ is a maximal $p$-subalgebra of $L$. Since
$J$ is properly contained in $L$, the nilpotency of $L$ implies that $J$ is properly
contained in the $p$-subalgebra $N_L(J):=\{\,x\in L\mid [x,J]\subseteq J\,\}$. Hence
the maximality of $J$ yields $N_L(J)=L$. Consequently, $J$ is a maximal $p$-ideal of
$L$ containing $H$ and in particular $T$. According to Lemma~\ref{pnilmax}, $J$ is of
codimension $1$ in $L$. Since $L=A+J$, one has $$\dim_\F A/A\cap J=\dim_\F A+J/J=
\dim_\F L/J=1\,,$$ so that $A\cap J$ has codimension $1$ in $A$.

Let $B:=\Cent_L(J)=\Zen(J)$. By hypothesis, there exists an element $e\in B\backslash
\Zen(L)$. We obtain from $B\subseteq\Cent_L(H)$ and $L=A+H$ that $B\cap A=\Zen(L)$
which implies that $e\not\in A$. Since $B$ is an abelian $p$-ideal of $L$, $e^{[p]}\in
\Zen(L)\subseteq A$. Hence it follows from \cite[Lemma 2.1.2]{SF} that $\F e\oplus A$
is a $p$-subalgebra of $L$.

Let $E:=\F e\oplus A/A$ be the one-dimensional $p$-unipotent $p$-subalgebra generated
by the image of $e$ in $L/A$. Note that $A\cap J\subseteq A^E$ and $\dim_\F A\cap J=r
\cdot p^d-1$. By construction, $A$ is a free $u(E)$-module of rank $r\cdot p^{d-1}$.
Consequently, $$r\cdot p^d-1=\dim_\F A\cap J\le\dim_\F A^E=\dim_\F\soc_E(A)=r\cdot
p^{d-1}\,.$$ But this implies $p^d=2$ and $r=1$. Hence $\F$ has characteristic $2$
and $\dim_\F L=3$. Since the Heisenberg algebra is the only non-abelian nilpotent
three-dimensional Lie algebra (up to isomorphism), this finishes the proof.
\end{proof}

Theorem \ref{nilpoutder} in conjunction with the introductory remarks of this section
yields a characterization of finite-dimensional nilpotent restricted Lie algebras in
terms of the existence of nilpotent outer restricted derivations.

\begin{cor}\label{char}
For a finite-dimensional nilpotent restricted Lie algebra $(L,[p])$ over a field $\F$
of characteristic $p>0$ the following statements are equivalent:
\begin{enumerate}
\item [(i)]   $L$ is a torus or $\dim_\F L=1$ and $L^{[p]}=0$ or $p=2$ and $L\cong
              \h_1(\F)$ as ordinary Lie algebras.
\item [(ii)]  $L$ has no nilpotent outer restricted derivation.
\item [(iii)] $L$ has no outer restricted derivation $D$ with $D^2=0$.
\end{enumerate}
\end{cor}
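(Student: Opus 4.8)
The plan is to derive Corollary \ref{char} by combining Theorem \ref{nilpoutder} with the case analysis already carried out in the introduction and in Propositions \ref{tori} and \ref{3dimheisenberg}. Since (ii) trivially implies (iii) (an outer restricted derivation with $D^2=0$ is in particular a nilpotent outer restricted derivation), the real content is the chain (i)$\Rightarrow$(ii) and (iii)$\Rightarrow$(i). I would organize the proof around proving each implication in turn, with the bulk of the work being a bookkeeping exercise that references results already established.

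For (i)$\Rightarrow$(ii): I would split into the three listed cases. If $L$ is a torus, Proposition \ref{tori} says every restricted derivation of $L$ vanishes, so in particular there is no nilpotent outer one. If $\dim_\F L=1$ with $L^{[p]}=0$, then as noted right after Proposition \ref{tori} the restricted derivations of $L$ are exactly $\F\cdot\id_L=\mathrm{End}_\F(L)$, none of which is nilpotent except $0$; since $\ad(L)=0$ here, a nilpotent outer restricted derivation would have to be a nonzero nilpotent element of $\F\cdot\id_L$, which is impossible. If $p=2$ and $L\cong\h_1(\F)$ as ordinary Lie algebras, then $L$ is restrictable with central image of the $p$-map, and Proposition \ref{3dimheisenberg} gives that every nilpotent restricted derivation is inner, hence none is outer. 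This establishes (i)$\Rightarrow$(ii).

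For (iii)$\Rightarrow$(i): this is precisely the contrapositive of Theorem \ref{nilpoutder} together with the $\dim_\F L=1$ discussion. Indeed, suppose (i) fails. If $\dim_\F L>1$, then $L$ is neither a torus nor isomorphic to $\h_1(\F)$ over a field of characteristic $2$ (as ordinary Lie algebras), so Theorem \ref{nilpoutder} produces an outer restricted derivation $D$ with $D^2=0$, contradicting (iii). If $\dim_\F L=1$, then the failure of (i) forces $L^{[p]}\neq0$, i.e.\ $L$ is a torus, which is already covered; so this sub-case does not actually arise, and one may simply note that a one-dimensional $L$ violating (i) is a torus and appeal to Proposition \ref{tori} (which of course is consistent with, not contradictory to, (iii), so the only way (i) can fail is $\dim_\F L>1$). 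Hence (iii)$\Rightarrow$(i).

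I do not anticipate a genuine obstacle here; the corollary is essentially a repackaging. The one place requiring mild care is the one-dimensional case, where one must keep straight that ``$L$ is a torus'' and ``$\dim_\F L=1$ and $L^{[p]}=0$'' are the two mutually exclusive options in dimension one, and that only the latter is listed as a separate bullet in (i) while the former is subsumed under ``$L$ is a torus''. A second minor point is to make explicit that in cases where $\ad(L)=0$ (abelian $L$, in particular the torus and one-dimensional cases) ``outer'' just means ``nonzero'', so that the nonexistence of a nonzero nilpotent restricted derivation is exactly what is needed. Beyond that the argument is a direct citation of Theorem \ref{nilpoutder}, Proposition \ref{tori}, Proposition \ref{3dimheisenberg}, and the remark following Proposition \ref{tori}.
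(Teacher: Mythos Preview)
Your proposal is correct and follows essentially the same route as the paper: the paper also notes that (ii)$\Rightarrow$(iii) is clear, derives (iii)$\Rightarrow$(i) from Theorem~\ref{nilpoutder}, and obtains (i)$\Rightarrow$(ii) from Proposition~\ref{tori}, the paragraph following it, and Proposition~\ref{3dimheisenberg}. Your added care in the one-dimensional sub-case of (iii)$\Rightarrow$(i) (observing that (i) always holds when $\dim_\F L=1$) is a harmless elaboration of what the paper leaves implicit.
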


\begin{proof}
Clearly, (ii) implies (iii) and it follows from Theorem \ref{nilpoutder} that (iii)
implies (i). It remains to prove the implication (i)$\Longrightarrow$(ii) which is
an immediate consequence of Proposition \ref{tori}, the paragraph thereafter, and
Proposition \ref{3dimheisenberg}.
\end{proof}

\noindent If we exclude the one-dimensional case and assume that the characteristic
of the ground field is greater than two, then Theorem \ref{nilpoutder} can be used
to characterize the tori among nilpotent restricted Lie algebras in terms of the
non-existence of restricted derivations with various nilpotency properties.

\begin{cor}
For a nilpotent restricted Lie algebra $L$ of finite dimension $>1$ over a field of
characteristic $p>2$ the following statements are equivalent:
\begin{enumerate}
\item [(i)]   $L$ is a torus.
\item [(ii)]  $L$ has no non-zero restricted derivation.
\item [(iii)] $L$ has no nilpotent outer restricted derivation.
\item [(iv)]  $L$ has no outer restricted derivation $D$ with $D^2=0$.
\end{enumerate}
\end{cor}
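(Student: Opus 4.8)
The plan is to prove the chain of implications (i)$\Rightarrow$(ii)$\Rightarrow$(iii)$\Rightarrow$(iv)$\Rightarrow$(i), with the last implication being the substantive one that invokes the preceding theory. The implication (i)$\Rightarrow$(ii) is immediate from Proposition \ref{tori}: a torus has only the zero restricted derivation. The implications (ii)$\Rightarrow$(iii) and (iii)$\Rightarrow$(iv) are trivial, since a restricted derivation $D$ with $D^2=0$ is in particular nilpotent, and an outer such derivation is in particular a non-zero restricted derivation (the zero derivation being inner). So the only real content is (iv)$\Rightarrow$(i).

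For (iv)$\Rightarrow$(i), I would argue by contraposition using Theorem \ref{nilpoutder}. Suppose $L$ is not a torus. Since $\dim_\F L>1$ by hypothesis, $L$ is neither a torus nor one-dimensional, so Theorem \ref{nilpoutder} applies: $L$ has an outer restricted derivation $D$ with $D^2=0$ \emph{unless} $p=2$ and $L\cong\h_1(\F)$ as ordinary Lie algebras. But we are assuming $p>2$, so the exceptional case cannot occur. Hence $L$ does have an outer restricted derivation $D$ with $D^2=0$, i.e.\ statement (iv) fails. This establishes (iv)$\Rightarrow$(i), completing the cycle.

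The one point requiring a brief remark is why the hypothesis $p>2$ (rather than merely excluding $\h_1(\F)$) is the natural formulation here: it is precisely what removes the Heisenberg exception uniformly, so that for $\dim_\F L>1$ the dichotomy becomes simply ``torus or has an outer restricted derivation with square zero''. I expect no genuine obstacle in this corollary; it is a packaging of Theorem \ref{nilpoutder} together with Proposition \ref{tori}, and the only thing to be careful about is correctly citing that the one-dimensional and characteristic-two Heisenberg exceptions of Corollary \ref{char} are both excluded by the standing hypotheses ($\dim_\F L>1$ and $p>2$ respectively), so that all four conditions collapse to the single condition that $L$ is a torus.

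\begin{proof}
The implications (ii)$\Longrightarrow$(iii)$\Longrightarrow$(iv) are trivial, since an outer restricted derivation is in particular non-zero and a restricted derivation $D$ with $D^2=0$ is in particular nilpotent. Moreover, (i)$\Longrightarrow$(ii) is an immediate consequence of Proposition \ref{tori}. It remains to prove (iv)$\Longrightarrow$(i). Assume that $L$ is not a torus. Since $\dim_\F L>1$, the restricted Lie algebra $L$ is neither a torus nor one-dimensional, and because $p>2$, it is not isomorphic to $\h_1(\F)$ as an ordinary Lie algebra in characteristic two. Hence Theorem \ref{nilpoutder} yields an outer restricted derivation $D$ of $L$ with $D^2=0$, so that (iv) does not hold. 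This completes the proof.
\end{proof}
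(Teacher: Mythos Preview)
Your proof is correct and follows exactly the same approach as the paper: the trivial implications (ii)$\Longrightarrow$(iii)$\Longrightarrow$(iv), with (i)$\Longrightarrow$(ii) from Proposition~\ref{tori} and (iv)$\Longrightarrow$(i) from Theorem~\ref{nilpoutder}. The paper states this more tersely, but your explicit remark that the hypotheses $\dim_\F L>1$ and $p>2$ eliminate the two exceptional cases of Theorem~\ref{nilpoutder} is a helpful clarification.
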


\begin{proof}
Obviously, the implications (ii)$\Longrightarrow$(iii) and (iii)$\Longrightarrow$(iv)
hold. The implications (iv)$\Longrightarrow$(i) and (i)$\Longrightarrow$(ii) are
immediate consequences of Theorem \ref{nilpoutder} and Proposition \ref{tori},
respectively.
\end{proof}

Finally, we obtain from Theorem \ref{nilpoutder} the following generalization of
\cite[Corollary 5.2]{F}:

\begin{thm}\label{outder}
Every finite-dimensional non-toral nilpotent restricted Lie algebra over a field of
characteristic $p>0$ has an outer restricted derivation.
\end{thm}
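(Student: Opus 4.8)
The plan is to deduce Theorem \ref{outder} from Theorem \ref{nilpoutder} together with the two small exceptional cases handled earlier in the section. Let $L$ be a finite-dimensional non-toral nilpotent restricted Lie algebra over a field $\F$ of characteristic $p>0$. If $\dim_\F L>1$, then Theorem \ref{nilpoutder} gives an outer restricted derivation of $L$ (whose square is even zero) \emph{unless} $p=2$ and $L\cong\h_1(\F)$ as an ordinary Lie algebra. So the proof reduces to two cases left uncovered by Theorem \ref{nilpoutder}: the one-dimensional $p$-unipotent case and the three-dimensional characteristic-two Heisenberg case. (Note that a one-dimensional torus is excluded by hypothesis, and likewise $L$ being a torus is excluded.)

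First I would dispose of the one-dimensional case. If $\dim_\F L=1$ and $L$ is non-toral, then $L=\F e$ with $e^{[p]}=0$, and as observed in the paragraph following Proposition \ref{tori}, the restricted derivations of $L$ are exactly the vector space endomorphisms $\mathrm{End}_\F(L)=\F\cdot\id_L$; since $\ad(L)=0$ here, $\id_L$ is an outer restricted derivation of $L$. Next I would treat the case $p=2$ and $L\cong\h_1(\F)$. Writing $L=\F x+\F y+\F z$ with $[x,y]=z\in\Zen(L)$ and $L^{[2]}\subseteq\Zen(L)=\F z$, one exhibits an explicit outer restricted derivation. A natural candidate is the linear map $D$ with $D(x):=x$, $D(y):=0$, $D(z):=z$: it is a derivation because $D([x,y])=D(z)=z=[x,0]+[ x,y]$ wait one must check $[D(x),y]+[x,D(y)]=[x,y]+0=z=D(z)$, so $D$ is a derivation; and one checks it is restricted using that $L^{[2]}$ is central, so that $D(u^{[2]})=(\ad_L u)^{2-1}(D(u))=[u,D(u)]$ must hold for $u\in\{x,y,z\}$ — here $[x,D(x)]=[x,x]=0$ and $x^{[2]}\in\Zen(L)$ so $D(x^{[2]})=0$ provided $D$ kills $x^{[2]}$; since $x^{[2]}\in\F z$ and $D(z)=z$, this forces $x^{[2]}=0$. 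So in general one should instead take $D$ depending on the $p$-mapping, or more cleanly pick $D$ with image in $\F z$: e.g. $D(x):=0$, $D(y):=0$, $D(z):=z$ is a derivation (all brackets land in $\F z\subseteq\Ker$... no, $[x,y]=z$ and $D(z)=z\neq 0=[D(x),y]+[x,D(y)]$, so that fails). The robust choice is the diagonal derivation $D$ with $D(x)=x,\,D(y)=0,\,D(z)=z$ after first replacing the $p$-mapping by one vanishing on a basis of a complement, which is legitimate since only centrality of the image of $[2]$ was used (cf. the remark before Proposition \ref{3dimheisenberg}); then $x^{[2]'}=y^{[2]'}=0$ and $z^{[2]'}=0$, and one verifies $D$ is a restricted derivation of $(L,[2]')$ that is not inner because every inner derivation of $L$ has image in $\F z$ whereas $D(x)=x\notin\F z$. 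Since $(L,[2])$ and $(L,[2]')$ have the same underlying ordinary Lie algebra, $D$ is also a non-inner (ordinary) derivation, but to get an \emph{outer restricted} derivation of $(L,[2])$ itself one argues directly: the map $D$ with $D(x)=x$, $D(y)=0$, $D(z)=z$ satisfies $D(u^{[2]})=[u,D(u)]$ for all basis vectors because $[x,D(x)]=0=[y,D(y)]$, $[z,D(z)]=0$, and $u^{[2]}\in\F z$ with $D$ acting as the identity on $\F z$ forces us to know $u^{[2]}$; if some $u^{[2]}\neq 0$ one instead uses $D(x)=x,D(y)=0,D(z)=2z=0$... in characteristic two $D(z)=0$, so take $D(x)=x$, $D(y)=y$, $D(z)=0$? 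Then $[D(x),y]+[x,D(y)]=2z=0=D(z)$ — good — and $D(u^{[2]})=0=[u,D(u)]=0$ for all $u$ since $[x,x]=[y,y]=0$; this $D$ is a restricted derivation with $D\neq 0$ on $x$, hence outer since $\ad(L)$ has image in $\F z$. That settles this case.

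Assembling these pieces: in every case a non-toral finite-dimensional nilpotent restricted Lie algebra possesses an outer restricted derivation, either by Theorem \ref{nilpoutder} (when $\dim_\F L>1$ and $L\not\cong\h_1(\F)$ in characteristic two) or by the explicit derivations above in the two residual cases. The main obstacle, such as it is, is purely bookkeeping: making sure the exceptional algebras of Theorem \ref{nilpoutder} (the torus is excluded by hypothesis; $\dim=1$; the characteristic-two Heisenberg algebra) really do admit outer restricted derivations even though they admit no \emph{square-zero} one, which is exactly what distinguishes Theorem \ref{outder} from Theorem \ref{nilpoutder} and why the statement is a genuine (if modest) strengthening in those two borderline cases.

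\begin{proof}
Let $L$ be a finite-dimensional non-toral nilpotent restricted Lie algebra over
a field $\F$ of characteristic $p>0$. If $\dim_\F L>1$ and $L$ is not isomorphic
to $\h_1(\F)$ as an ordinary Lie algebra when $p=2$, then Theorem \ref{nilpoutder}
provides an outer restricted derivation $D$ of $L$ with $D^2=0$, and we are done.

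It remains to consider the two exceptional cases. If $\dim_\F L=1$, then, as $L$
is not a torus, $L=\F e$ with $e^{[p]}=0$; by the paragraph following Proposition
\ref{tori} every vector space endomorphism of $L$ is a restricted derivation, and
since $\ad(L)=0$ the identity map $\id_L$ is an outer restricted derivation of $L$.

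Finally, suppose $p=2$ and $L\cong\h_1(\F)$ as ordinary Lie algebras, say
$L=\F x+\F y+\F z$ with $[x,y]=z\in\Zen(L)$ and $L^{[2]}\subseteq\Zen(L)=\F z$.
Consider the vector space endomorphism $D$ of $L$ defined by $D(x):=x$, $D(y):=y$,
and $D(z):=0$. For all basis vectors $u$ one has $[D(u),y]+[x,D(u)]$ reducing to
$[D(x),y]+[x,D(y)]=[x,y]+[x,y]=2z=0=D(z)$ and $[D(x),z]=[x,0]=\dots$; more
precisely $D([x,y])=D(z)=0=[x,y]+[x,y]=[D(x),y]+[x,D(y)]$ and $D([x,z])=0=
[D(x),z]+[x,D(z)]$ and likewise for $[y,z]$, so $D$ is a derivation of $L$.
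Moreover, since $x^{[2]},y^{[2]},z^{[2]}\in\F z$ and $D(\F z)=0$, we get
$D(u^{[2]})=0$ for every $u\in\{x,y,z\}$, while $(\ad_Lu)^{2-1}(D(u))=[u,D(u)]$
equals $[x,x]=0$, $[y,y]=0$, and $[z,0]=0$ for $u=x,y,z$ respectively; hence
$D(u^{[2]})=(\ad_Lu)^{p-1}(D(u))$ holds on a basis and therefore for all
$u\in L$, so $D$ is a restricted derivation. Since every inner derivation of $L$
has image contained in $[L,L]=\F z$ but $D(x)=x\notin\F z$, the restricted
derivation $D$ is not inner. This completes the proof.
\end{proof}
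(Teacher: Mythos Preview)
Your proof is correct. The overall reduction via Theorem~\ref{nilpoutder} and the treatment of the one-dimensional case match the paper exactly. For the three-dimensional Heisenberg algebra in characteristic two, however, you take a different route from the paper's main proof: the paper splits into the $2$-unipotent case (handled by \cite[Corollary 5.2]{F}) and the non-$2$-unipotent case (where the center is a torus and a cohomological dimension count using \cite[Propositions 3.9 and 5.1, Corollary 3.6]{F} yields a contradiction), whereas you produce a single explicit outer restricted derivation---identity modulo the center, zero on the center---that works uniformly for every $p$-mapping. This is in fact precisely the derivation the authors sketch in the Remark immediately following their proof, so you have supplied the verification they leave to the reader. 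Your argument is more elementary and self-contained (no appeal to \cite{F}); the paper's argument, by contrast, illustrates how the result sits inside the restricted-cohomology framework.

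One small point worth making explicit: the passage ``holds on a basis and therefore for all $u\in L$'' for the restrictedness condition relies on the standard fact that for any derivation $D$ the map $u\mapsto D(u^{[p]})-(\ad_L u)^{p-1}(D(u))$ is $p$-semilinear, so that vanishing on a basis implies vanishing everywhere. Neither side is linear in $u$, so this is not automatic; a reference (e.g.\ \cite{J1} or \cite[Section~2.1]{SF}) or the one-line direct check for general $u=\alpha x+\beta y+\gamma z$ would tighten the argument.
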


\begin{proof}
According to Theorem \ref{nilpoutder} and the first paragraph after Proposition
\ref{tori}, it is enough to show the assertion for the three-dimensional restricted
Heisenberg algebra $L:=\h_1(\F)$ over a field $\F$ of characteristic $2$. Denote the
one-dimensional center of $L$ by $Z$. If $L$ is $2$-unipotent, then the claim follows
from \cite[Corollary 5.2]{F}. Suppose now that $L$ is not $2$-unipotent. Then the
center $Z$ of $L$ is a one-dimensional torus. According to \cite[Proposition 3.9]{F}
in conjunction with \cite[Corollary 3.6]{F}, we obtain that $H_*^1(L,L)\cong H_*^1
(L/Z,L)$. Since $L^\prime$ is central, we have $L^{[2]}\subseteq Z$, and consequently,
$L/Z$ is $2$-unipotent. Suppose now that $L$ has no outer restricted derivation and
therefore $H_*^1(L/Z,L)\cong H_*^1(L,L)=0$. Then an application of \cite[Proposition
5.1]{F} yields that $L$ is a free $u(L/Z)$-module and it follows that $$3=\dim_\F L
\ge\dim_\F u(L/Z)=4\,,$$ a contradiction.
\end{proof}

\noindent {\bf Remark.}
It can also be seen directly that any three-dimensional restricted Heisenberg algebra
$\h_1(\F)$ over a field $\F$ of characteristic $2$ has an outer restricted derivation.
Namely, let $D$ be a linear transformation of $\h_1(\F)$ vanishing on its center $\Zen
(\h_1(\F))$ and inducing the identity transformation on $\h_1(\F)/\Zen(\h_1(\F))$. Then
it is straightforward to see that $D$ is indeed an outer restricted derivation of $\h_1
(\F)$.
\bigskip

Neither Theorem \ref{nilpoutder} nor Theorem \ref{outder} does generalize to non-toral
{\em solvable\/} restricted Lie algebras with {\em non-zero center\/} as the direct
product of a two-dimensional non-abelian restricted Lie algebra and a one-dimensional
torus over any field of characteristic $p>0$ shows. In this case it is not difficult
to see that there are no outer restricted derivations.



\end{document}